\tikzset{
v/.style={
  circle, draw, inner sep=2pt, minimum size=6pt, fill=white},
l/.style={
  circle, draw, inner sep=2pt, minimum size=6pt, fill=black}
}
\theoremstyle{plain}
\newtheorem{theorem}{Theorem}[section]
\newtheorem{proposition}[theorem]{Proposition}
\newtheorem{lemma}[theorem]{Lemma}
\newtheorem{corollary}[theorem]{Corollary}
\theoremstyle{definition}
\newtheorem{example}[theorem]{Example}
\newtheorem{definition}[theorem]{Definition}
\newtheorem{remark}[theorem]{Remark}
 \newcommand \C {{\mathbb C}}
 \newcommand \A{{\mathcal A}}
  \newcommand \B{{\mathcal B}}
  \newcommand \G{{\mathcal G}}
\DeclareMathOperator{\codim}{codim}
\newcommand \spann  {{\rm span}}
\renewcommand\ell{l}
\def \X(#1){\{x_1,\dots, x_{#1}\}}
\begin{document}

\title {On the Falk invariant of hyperplane arrangements attached to gain graphs}
\begin{abstract} The fundamental group of the complement of a hyperplane arrangement in a complex vector space is an important topological invariant. The third rank of successive quotients in the lower central series of the fundamental group was called the \emph{Falk invariant} of the arrangement since Falk gave the first formula and asked for a combinatorial interpretation. In this article, we give a combinatorial formula for the Falk invariant of hyperplane arrangements attached to certain gain graphs.
\end{abstract}

\author{Weili Guo}
\address{Department of Mathematics,
Beijing University of Chemical Technology, Beijing,100013, China}
\email{guowl@mail.buct.edu.cn}
\author{Michele Torielli}
\address{Department of Mathematics, GI-CoRE GSB, Hokkaido University, Sapporo 060-0810, Japan.}
\email{torielli@math.sci.hokudai.ac.jp}

\date{\today}
\maketitle


\section{Introduction}
A \textbf{hyperplane} $H$ in $\C^\ell$ is an affine subspace of dimension $\ell-1$. A finite collection $\A = \{H_1, \dots , H_n\}$ of hyperplanes is called a \textbf{hyperplane arrangement}. If $\bigcap_{i=1}^{n}H_i\ne\emptyset$, then $\A$ is called \textbf{central}. In this paper, we only consider central arrangements and assume that all the hyperplanes contain the origin. For more details on hyperplane arrangements, see \cite{orlterao}.

Let $M:=\C^\ell\setminus\bigcup_{H\in\A}H$ be the complement of the arrangement $\A$. It is known that the cohomology ring $H^*(M)$ is completely determined by $L(\A)$, the lattice of intersections of $\A$. There are several conjectures concerning the relationship between $M$ and $L(\A)$. To study such problems, Falk introduced in \cite{falk1990algebra} a multiplicative invariant, called \textbf{global invariant}, of the Orlik--Solomon algebra of $\A$. The invariant is now known as the ($3^{rd}$) \textbf{Falk invariant} and it is denoted by $\phi_3$. In \cite{falk2001combinatorial}, Falk posed as an open problem the finding of a combinatorial interpretation of $\phi_3$. 

Several authors already studied this invariant. In \cite{schenck2002lower}, Schenck and Suciu studied the lower central series of arrangements and described a formula for the Falk invariant in the case of graphic arrangements. In \cite{guo2017global}, the authors gave a formula for $\phi_3$ in the case of signed graphic arrangements, whose underlying  signed graph has only simple edges and no loops. In \cite{guo2017falkinvar}, the authors extended the previous result to signed graphic arrangements coming from graphs without loops. In \cite{guo2017falk}, we described a combinatorial formula for the Falk invariant of several signed graphic arrangements with loops. In this paper, we will describe a combinatorial formula for the Falk invariant $\phi_3$ for $\mathcal{A}(\G)$, an arrangement associated to certain gain graphs. 
Since a signed graph is a special case of the type of gain graphs considered in this paper, the previous results will follow.

The paper is organized as follows. In Section 2, we recall the notions of Orlik--Solomon algebras and the Falk invariant. In Section 3, we recall the definitions and basic properties of  gain graphs. In Section 4, we list all the gain graphs that will play a role in our main theorem. In Section 5, we state and prove our main theorem. In Section 6, we give a matroidal interpretation of our main theorem.

All the computations in this article have been performed using the computer algebra software CoCoA, see \cite{palezzato2019hyperplane}.

\section{Preliminares on Orlik--Solomon algebras}
Let $\A=\{H_1, \dots, H_n\}$ be a central arrangement of hyperplanes in $\C^{\ell}$.
Let $E^1:=\bigoplus_{j=1}^n\C e_j$ be the free module generated by $e_1, e_2, \dots, e_n$, where $e_i$ is a symbol corresponding to the hyperplane $H_i$.
Let $E:=\bigwedge E^1$ be the exterior algebra over $\C$. The algebra $E$ is graded via $E=\bigoplus_{p=0}^nE^p$, where $E^p:=\bigwedge^pE^1$.
The $\C$-module $E^p$ is free and has the distinguished basis consisting of monomials $e_S:=e_{i_1}\wedge\cdots\wedge e_{i_p}$,
where $S=\{{i_1},\dots, {i_p}\}$ is running through all the subsets of $\{1,\dots,n\}$ of cardinality $p$ with $i_1<i_2<\cdots<i_p$.
The graded algebra $E$ is a commutative differential graded algebra with respect to the differential $\partial$ of degree $-1$ uniquely defined
by the conditions $\partial e_i=1$ for all $i=1,\dots, n$ and the graded Leibniz formula. Then for every $S\subseteq\{1,\dots,n\}$ of cardinality $p$, we have
$$\partial e_S=\sum_{j=1}^p(-1)^{j-1}e_{S_j},$$
where $S_j$ is the complement in $S$ to its $j$-th element.

For $S\subseteq\{1,\dots,n\}$, put $\bigcap S:=\bigcap_{i\in S}H_i$. The set of all
intersections $L(\A):=\{\bigcap S\mid S\subseteq \{1,\dots,n\}\}$ is called the \textbf{intersection lattice of $\A$}.
A subset $S\subseteq\{1,\dots,n\}$ is called \textbf{dependent} if the set of linear polynomials $\{\alpha_i~|~i\in S\}$, with $H_i=\alpha_i^{-1}(0)$, is linearly dependent. This definition a priori depends on the choice of defining linear forms $\alpha_i$, however it depends only on the hyperplanes in $S$. In fact, it is equivalent to ask that $|S| > \codim(S)$.
\begin{definition}\label{def:osalgbr}
The \textbf{Orlik--Solomon ideal} of $\A$ is the ideal $I=I(\A)$ of $E$ generated by $\{\partial e_S~|~S \text{ dependent }\}$.
The algebra $A:=A^\bullet(\A)=E/I(\A)$ is called the \textbf{Orlik--Solomon algebra} of $\A$.
\end{definition}
Clearly $I$ is a homogeneous ideal of $E$ and $I^p=I\cap E^p$ whence $A$ is a graded algebra and we can write $A=\bigoplus_{p\ge 0} A^p$, where $A^p=E^p/I^p$.
The map $\partial$ induces a well-defined differential $\partial\colon A^p(\A)\longrightarrow A^{p -1}(\A)$, for any $p>0$.

Let $I_k$ be the ideal of $E$ generated by $\sum_{j\le k}I^j$. We call $I_k$ the
 \textbf{$k$-adic Orlik--Solomon ideal} of $\A$. It is clear that $I_k$ is a graded ideal and $(I_k)^p = E^p\cap I_k$. Write $A_k:= E/I_k$ and
$A_k^p:= E^p/(I_k)^p$ which is called \textbf{$k$-adic Orlik--Solomon algebra} by Falk \cite{falk1990algebra}.


In this setup, it is now easy to define the Falk invariant.
\begin{definition} Consider the map $d$ defined by
$$d\colon E^1\otimes I^2\to E^3,$$
$$d(a\otimes b)=a\wedge b.$$
Then the \textbf{Falk invariant} is defined as
$$\phi_3:=\dim(\ker(d)).$$
\end{definition}

In \cite{falk1990algebra} and \cite{falk2001combinatorial}, Falk gave a beautiful formula to compute such invariant. In \cite{falk2001combinatorial}, there is a typo in the formula, the correct one is the one described below.

\begin{theorem}\cite[Theorem 4.7]{falk2001combinatorial}\label{theo:falkinvar} Let $\A=\{H_1, \dots, H_n\}$ be a central arrangement of hyperplanes in $\C^{\ell}$. Then
\begin{equation}\label{eq:falktheorem1}
\phi_3=2\binom{n+1}{3}-n\dim(A^2)+\dim(A^3_2).
\end{equation}
\end{theorem}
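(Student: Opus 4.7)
The plan is to prove the formula by applying the rank-nullity theorem to the map $d\colon E^1\otimes I^2\to E^3$, since by definition $\phi_3=\dim(\ker(d))$. This gives
\[
\phi_3 \;=\; \dim(E^1\otimes I^2)-\dim(\mathrm{im}(d)),
\]
so the task reduces to expressing each of these two dimensions in terms of $n$, $\dim(A^2)$, and $\dim(A^3_2)$.

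For the source, note that $\dim(E^1)=n$ and, from $A^2=E^2/I^2$ together with $\dim(E^2)=\binom{n}{2}$, one obtains $\dim(I^2)=\binom{n}{2}-\dim(A^2)$. Hence
\[
\dim(E^1\otimes I^2)\;=\;n\binom{n}{2}-n\dim(A^2).
\]

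The key step is to identify $\mathrm{im}(d)$ with $I_2^3$. Recall that $I_2$ is the ideal of $E$ generated by $I^1+I^2$. Since the arrangement is central, every singleton $\{i\}$ has non-empty intersection and is not linearly dependent, so $I^1=0$ and consequently $I_2=E\cdot I^2$. Taking the degree-three graded piece gives $I_2^3=E^1\wedge I^2$, which is exactly the image of $d$. Combined with $A^3_2=E^3/I_2^3$, this yields $\dim(\mathrm{im}(d))=\binom{n}{3}-\dim(A^3_2)$.

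Substituting both computations into the rank-nullity equation and simplifying with the combinatorial identity $n\binom{n}{2}-\binom{n}{3}=\tfrac{n(n-1)(n+1)}{3}=2\binom{n+1}{3}$ (a direct check using the factorial formulas) produces equation (\ref{eq:falktheorem1}). The only conceptually non-routine step is the identification $\mathrm{im}(d)=I_2^3$; once centrality is used to kill $I^1$, everything else is bookkeeping, and I do not expect any genuine obstacle in this argument.
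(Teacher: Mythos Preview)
Your argument is correct. The paper does not supply its own proof of this theorem; it quotes the result from Falk \cite{falk2001combinatorial} and then uses it as a tool, so there is nothing in the paper to compare your reasoning against. Your derivation via rank--nullity, the identification $\mathrm{im}(d)=E^1\wedge I^2=I_2^3$ (using $I^0=I^1=0$, which holds for any central arrangement of distinct hyperplanes), and the binomial identity $n\binom{n}{2}-\binom{n}{3}=2\binom{n+1}{3}$ is exactly the standard proof of Falk's formula and is consistent with how the paper exploits the theorem in Remark~\ref{rem:falkinvariantreduct}.
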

\begin{remark}\label{rem:falkinvariantreduct} Since $\dim(A^3_2)=\dim((E/I_2)^3)=\dim(E^3)-\dim((I_2)^3)$ and $\dim(E^3)=\binom{n}{3}$, then we obtain
\begin{equation}\label{eq:falktheorem}
\phi_3=2\binom{n+1}{3}-n\dim(A^2)+\binom{n}{3}-\dim((I_2)^3).
\end{equation}
\end{remark}

From \cite{falk1988minimal}, we have that $\phi_3$ can also be described from the lower central series of the fundamental group $\pi(M)$ of $M=\C^{\ell}\setminus\bigcup_{H\in\A}H$ the complement of the arrangement. In particular, if we consider the lower central series as a chain of normal subgroups $N_i$, for $k \ge 1$, where $N_1 = \pi(M)$ and $N_{k+1} = [N_k,N_1]$, the subgroup generated by commutators of elements in $N_k$ and $N_1$, then $\phi_3$ is the rank of the finitely generated abelian group $N_3/N_4$. 

\section{Gain graphs}
In this section, we recall the basic notions of gain graphs, and we describe the connection between hyperplane arrangements and gain graphs. See \cite{suyama2019signed, torielli2018freeness, zaslavsky1989biased, zaslavsky1991biased, zaslavsky2003biased} for a thorough treatment of the subject.
\subsection{Gain graphs}

\begin{definition} Let $K$ be a field. A \textbf{gain graph} $\G= (G, \varphi)$ consists of an underlying oriented graph $|\G|=G=(\mathcal{V}_G,\mathcal{E}_G)$ and a \textbf{gain map} $\varphi\colon \mathcal{E}_G\to K^*$ from the edges of $G$ into the \textbf{gain group} $K^*:=K\setminus\{0\}$.  
\end{definition}
By convention $\varphi(\mathtt{e}^{-1})=\varphi(\mathtt{e})^{-1}$, where $\mathtt{e}^{-1}$ means $\mathtt{e}$ with its orientation reversed.
\begin{remark} A signed graph is a gain graph with gain group $\mathfrak{G}$ equal to $\{1, -1\}$. For more details, we refer the reader to \cite{guo2017global},  \cite{guo2017falkinvar}, and \cite{guo2017falk}.
\end{remark}

A \textbf{subgraph} of $\G$ is a subgraph of the underlying graph $|\G|$ with the same gain map, restricted to the subgraph's edges.

An oriented path $P = \mathtt{e}_1\mathtt{e}_2\cdots \mathtt{e}_k$ has gain value $\varphi(P) = \varphi(\mathtt{e}_1) \varphi(\mathtt{e}_2)\cdots\varphi(\mathtt{e}_k)$ under $\varphi$. 
An oriented circle whose gain value is $1$ is called \textbf{balanced}. It is \textbf{unbalanced} if it is not balanced. The class of balanced circles is denoted by $\B(\G)$. We call $\left<\G\right>=(G,\B(\G))$ the \textbf{biased graph} associated to $\G$. Clearly $\left<\G\right>$ depends only on the underlying unoriented graph, independent of the chosen orientation of $\G$. We call $\G$ \textbf{balanced} if all its circles are balanced, and \textbf{contrabalanced} if it contains no balanced circles at all. 

In this paper, we will assume that all $2$-circles and loops of $\G$ are unbalanced. 
\begin{figure}[h]
\centering
\begin{tikzpicture}[baseline=10pt]
\draw (1.7,3) node[v,label=right:{$v_1$}](1){};
\draw (0,0) node[v,label=left:{$v_2$}](2){};
\draw (3.4,0) node[v,label=right:{$v_3$}](3){};
\draw[] (1)--(2);
\draw[] (1)--(3);
\draw[] (2)--(3);
\draw[bend right,->>] (1) to (2);
\draw[bend left,->>] (1) to (2);
\draw[bend right,<<-] (2) to (3);
\draw[scale=2,->>] (1)  to[in=135,out=45,loop] (1);
\draw (0.1,1.8) node {2};
\draw (0.5,1.55) node {1};
\draw (1,1.3) node {3};
\draw (2.4,1.3) node {1};
\draw (1.7,0.2) node {1};
\draw (1.7,-0.7) node {2};
\draw (1.2,3.4) node {-1};
\end{tikzpicture}
\caption{Example of a gain graph.}\label{Fig:exgaingraph}
\end{figure}

\begin{example}\label{ex:gaingraphexamp} In Figure \ref{Fig:exgaingraph}, we have a gain graph $\G$ with gains in $\mathbb{Q}^*$, the multiplicative group of rational numbers. The arrows on the edges are there to show the direction in which the gain is as stated. We adopt the simplified notation $\mathtt{e}_{ij}(g)$ for an edge $\{v_i,v_j\}$ with gain  $\varphi(\mathtt{e}_{ij}(g))=g$. (Then for instance $\mathtt{e}_{12}(2)=\mathtt{e}_{21}(2^{-1})$.) The balanced circles are $C_1:=\{\mathtt{e}_{12}(1),\mathtt{e}_{23}(1),\mathtt{e}_{13}(1)\}$ and $C_2:=\{\mathtt{e}_{12}(2),\mathtt{e}_{32}(2),\mathtt{e}_{13}(1)\}$. In fact their gains are $\varphi(C_1)=1\cdot1\cdot1=1$ and $\varphi(C_2)=2\cdot2^{-1}\cdot1=1$. Therefore $\left<\G\right>=(G,\{C_1,C_2\})$.
\end{example}

\begin{theorem}\cite[Theorem 2.1]{zaslavsky1991biased} \label{theo:biasedmatroiddef} Let $\G$ be a gain graph. Then there is a matroid $M(\G)$, whose points are the edges of $\G$ and whose circuits are the edge sets of balanced circles, contrabalanced theta graphs, contrabalanced loose handcuffs and contrabalanced tight handcuffs.
\end{theorem}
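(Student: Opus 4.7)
The plan is to verify the three circuit axioms for the proposed family $\mathcal{C}$ of edge sets of $\Gamma$: (C1) $\emptyset\notin\mathcal{C}$; (C2) no member of $\mathcal{C}$ properly contains another; (C3) the circuit elimination property. Axiom (C1) is immediate, since every circle, $\theta$-graph, and handcuff has at least one edge. For (C2), the key observation is that a ``$\theta$-graph / loose handcuff / tight handcuff'' qualifying as a circuit must have no balanced sub-circle, since otherwise that sub-circle would be a strictly smaller circuit. For $\theta$-subgraphs this reading is coherent with the linear subclass axiom, which forbids exactly two of the three circles being balanced and thus forces the three possibilities ``all three balanced'', ``exactly one balanced'', or ``none balanced''; only the last qualifies as an ``unbalanced $\theta$'' in the circuit sense. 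With this reading, no circle can contain a $\theta$-subgraph or a handcuff (since the latter have vertices of degree $\ge 3$), and a short combinatorial check rules out proper containment between any two $\theta$/handcuff configurations.

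For (C3), let $C_1, C_2\in\mathcal{C}$ be distinct with a common edge $e$; I would split into cases according to the types of $C_1$ and $C_2$. The cleanest case is both balanced circles: the union $C_1\cup C_2$ contains a $\theta$-subgraph whose two boundary circles are $C_1$ and $C_2$, hence by the linear subclass axiom its third circle $C_3$ is also balanced, and by construction $C_3\subseteq(C_1\cup C_2)\setminus\{e\}$. When $C_1$ is a balanced circle and $C_2$ is an unbalanced configuration of one of the three listed types, I would re-route the path $C_1\setminus\{e\}$ through the structure of $C_2$; depending on where $e$ sits in $C_2$ (on one of the unbalanced circles versus on a connecting path), the re-routing produces either a smaller balanced circle or a smaller unbalanced $\theta$/handcuff inside $(C_1\cup C_2)\setminus\{e\}$. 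The remaining cases, in which both $C_1$ and $C_2$ are unbalanced of the listed types, require a systematic enumeration of the possible intersection patterns ($\theta$-$\theta$, $\theta$-loose, $\theta$-tight, loose-loose, loose-tight, tight-tight), and in each subcase one exhibits a circuit of the listed form inside $(C_1\cup C_2)\setminus\{e\}$ by re-routing an unbalanced cycle through the union.

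The main obstacle is precisely this enumeration in (C3): the subcases are numerous, and the argument for each relies on carefully tracking how deleting $e$ disconnects or re-organizes the two-cycle/connecting-path structure of an unbalanced configuration while ensuring the resulting object is still contrabalanced. A more conceptual alternative would be to define the rank function $r(S):=|V(S)|-b(S)$, where $V(S)$ is the set of vertices incident to $S$ and $b(S)$ is the number of balanced connected components of the subgraph $(V(S),S)$, to verify submodularity and the other rank axioms directly (once again invoking the linear subclass axiom to control how balance interacts with edge union), and then to identify the circuits of the resulting matroid with the four families listed in the statement; this trades the delicate combinatorial case analysis for a cleaner but more abstract rank computation.
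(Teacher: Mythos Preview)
The paper does not prove this theorem at all: it is quoted as Theorem~2.1 of Zaslavsky's 1991 paper \cite{zaslavsky1991biased} and used as a black box, so there is no in-paper argument to compare against.

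For what it is worth, your outline is exactly Zaslavsky's strategy in the cited paper: he verifies the circuit axioms directly, and separately establishes the rank formula you mention (your expression $r(S)=|V(S)|-b(S)$ agrees with his $|V|-b(V,S)$ once isolated vertices are counted as balanced components). Your reading of ``unbalanced $\theta$-graph/handcuff'' as \emph{contrabalanced} is the correct and necessary one for (C2) to hold. One caution on your (C3) sketch in the two-balanced-circles case: $C_1\cup C_2$ need not itself be a $\theta$-graph with $C_1$ and $C_2$ as two of its three circles (the two circles may share several arcs, not just the edge $e$), so one has to choose a $\theta$-subgraph inside $C_1\cup C_2$ containing $e$ and then iterate the linear-subclass axiom; Zaslavsky handles this via a careful path-rerouting argument. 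The remaining mixed and contrabalanced--contrabalanced subcases of (C3) are, as you anticipate, a lengthy but finite enumeration, carried out in full in the reference. Your proposal is an accurate roadmap to that proof rather than a proof in itself.
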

\begin{figure}[h]
\centering
\subfigure[]
{
\begin{tikzpicture}[baseline=0]
\foreach \x in {0,...,4}
\draw (90+72*\x:0.8) node[v](\x){};
\draw (0)--(1);
\draw (1)--(2);
\draw (2)--(3)--(4);
\draw (4)--(0);
\draw (1)--(4);
\end{tikzpicture}
}
\hspace{14mm}
\subfigure[]
{
\begin{tikzpicture}[baseline=0]
\foreach \x in {0,...,4}
\draw (180+72*\x:0.8) node[v](\x){};
\draw (-1.8,0) node[v](5){};
\draw (-2.8,0) node[v](6){};
\draw (0)--(1)--(2);
\draw (2)--(3);
\draw (3)--(4);
\draw (4)--(0);
\draw (0)--(5);
\draw (5)--(6);
\draw[scale=2] (6)  to[in=215,out=135,loop] (6);
\end{tikzpicture}
}
\hspace{14mm}
\subfigure[]
{
\begin{tikzpicture}[baseline=0]
\draw (0,0) node[v](1){};
\draw (-0.71, 0.71) node[v](2){};
\draw (-0.71,-0.71) node[v](3){};
\draw ( 0.71, 0.71) node[v](4){};
\draw ( 1.42, 0) node[v](5){};
\draw ( 0.71,-0.71) node[v](6){};
\draw (2)--(1)--(3);
\draw (2)--(3);
\draw (4)--(1)--(6)--(5);
\draw (4)--(5);
\end{tikzpicture}
}
\caption{Examples of (a) theta graph, (b) loose handcuff, (c) tight handcuff.}\label{Fig:thetahandcuff}
\end{figure}

\begin{definition} Let $\G$ be a gain graph. Then the matroid $M(\G)$ is called the \textbf{frame matroid} associated to $\G$.
\end{definition}
\begin{definition} A set of edges of a gain graph $\G$ is called a \textbf{circuit} if the corresponding points form a circuit in $M(\G)$.
\end{definition}

Let $\lambda\colon \mathcal{V}_G\to \mathfrak{G}$ be any function. Switching $\G$ by $\lambda$ means replacing $\varphi(\mathtt{e})$ by $\varphi^\lambda(\mathtt{e}) := \lambda(v)^{-1}\varphi(\mathtt{e})\lambda(w)$, where $\mathtt{e}$ is oriented from $v$ to $w$. The switched graph, $\G^\lambda = (G, \varphi^\lambda),$ is called \textbf{switching equivalent} to $\G$. In general, we will denote by $[\G]$ any gain graph that is switching equivalent to $\G$ for some $\lambda$.
\begin{lemma}\cite[Lemma 5.2]{zaslavsky1989biased}\label{lemma:samebiasmatr} $\langle[\G]\rangle=\langle\G\rangle$.
\end{lemma}
\begin{lemma}\cite[Lemma 5.3]{zaslavsky1989biased} $\G=(G,\varphi)$ is balanced if and only if $\varphi$ switches to the identity gain.
\end{lemma}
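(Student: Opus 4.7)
My plan is to prove the two implications separately. For the backward direction, suppose there exists $\lambda : V_G \to \mathfrak{G}$ with $\varphi^\lambda \equiv 1$. First I would check, by induction on length, that for any walk $W$ from $u$ to $w$ in $G$ the intermediate $\lambda$-values telescope into $\varphi^\lambda(W) = \lambda(u)^{-1} \varphi(W) \lambda(w)$. Applying this to a circle $C$ based at a vertex $v_0$ gives $\varphi^\lambda(C) = \lambda(v_0)^{-1} \varphi(C) \lambda(v_0)$. Since $\varphi^\lambda \equiv 1$ forces the left-hand side to equal $1$, conjugacy with the identity yields $\varphi(C) = 1$. Thus every circle is balanced and $\G$ is balanced.

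For the forward direction, assume $\G$ is balanced. I would reduce to the case where $G$ is connected by treating each connected component separately and then merging the switching functions. Fix a spanning tree $T \subseteq G$ and a basepoint $v_0 \in V_G$. Define $\lambda$ recursively on the vertices of $T$ by setting $\lambda(v_0) := 1$ and, for each tree edge $e$ oriented from a vertex $v$ (whose $\lambda$-value is already determined) to $w$, setting $\lambda(w) := \varphi(e)^{-1} \lambda(v)$. A short induction on tree distance from $v_0$ shows that $\lambda(v) = \varphi(P_{T,v})^{-1}$, where $P_{T,v}$ is the unique path in $T$ from $v_0$ to $v$. By construction $\varphi^\lambda(e) = 1$ on every tree edge.

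The heart of the argument is verifying $\varphi^\lambda(e) = 1$ for a non-tree edge $e$ oriented from $v$ to $w$. Let $u$ be the deepest common ancestor of $v$ and $w$ in $T$ (rooted at $v_0$), and write $P_{T,v} = R \cdot R_v$ and $P_{T,w} = R \cdot R_w$, where $R$ is the tree path from $v_0$ to $u$. Then
\[ \varphi^\lambda(e) = \lambda(v)^{-1} \varphi(e) \lambda(w) = \varphi(R)\,\varphi(R_v \cdot e \cdot R_w^{-1})\,\varphi(R)^{-1}. \]
The middle factor is the gain of the circle $C := R_v \cdot e \cdot R_w^{-1}$ based at $u$; since $\G$ is balanced, $\varphi(C) = 1$ and hence $\varphi^\lambda(e) = 1$.

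I expect the only real obstacle to be this non-tree edge computation, since the spanning-tree setup and the backward direction are essentially bookkeeping. One must also handle the degenerate situations where $e$ is a loop (so that $C$ is the loop itself) or $e$ is parallel to a tree edge (so that $C$ is a digon), using the convention that such short circles also belong to $\B(\G)$ when balanced so that $\varphi(C) = 1$ still yields $\varphi^\lambda(e) = 1$.
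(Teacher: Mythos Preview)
The paper does not supply its own proof of this lemma: it is simply quoted from Zaslavsky \cite{zaslavsky1989biased} (as Lemma~5.3 there) and used as a black box, so there is no in-paper argument to compare against. Your proposal is correct and is precisely the standard spanning-tree argument one finds for this result: trivialize $\varphi$ along a spanning tree by propagating $\lambda$ from a root, then use balancedness of the fundamental cycle $R_v\cdot e\cdot R_w^{-1}$ to handle each non-tree edge. The telescoping identity $\varphi^{\lambda}(W)=\lambda(u)^{-1}\varphi(W)\lambda(w)$ and the conjugation step are both right, and your caveat about loops and digons is the appropriate way to cover those degenerate circles. In short, there is nothing to contrast---your write-up fills in exactly the proof the paper chose to cite rather than reproduce.
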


Directly from Theorem \ref{theo:biasedmatroiddef}, we have the following result.
\begin{proposition}\label{prop:biaseqsamematroid}
If $\G_1$ and $\G_2$ are two gain graphs such that $\langle\G_1\rangle=\langle\G_2\rangle$, then $M(\G_1)=M(\G_2)$.
\end{proposition}

By Proposition \ref{prop:biaseqsamematroid} and Lemma \ref{lemma:samebiasmatr}, we have the following
\begin{corollary}\label{corol:switchinggivesamematroid} $M([\G])=M(\G)$. 
\end{corollary}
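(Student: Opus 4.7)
The plan is to deduce the equality of matroids directly from the preceding proposition and from Zaslavsky's matroid characterization (the Theorem 2.1 quoted in the excerpt). Since the bias matroid of the gain graph $\G$ is defined as the bias matroid of its associated biased graph $[\G]$, and switching leaves the underlying graph $G=(V_G,E_G)$ untouched, the ground sets of $M(\G)$ and $M(\G^\lambda)$ both coincide with $E_G$. What remains to verify is that the two matroids share the same circuits.

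By Theorem 2.1 in the excerpt, the circuits of a bias matroid are exactly the edge sets of balanced circles, unbalanced $\theta$-subgraphs, unbalanced loose handcuffs, and unbalanced tight handcuffs. The proposition just above the corollary asserts that these four families are invariants of the switching equivalence class: balanced circles by Lemma 5.2 of Zaslavsky (already cited as $[\G^\lambda]=[\G]$), and the unbalanced $\theta$-subgraphs and handcuffs because being such a substructure depends only on the underlying graph $G$ together with which of its circles are balanced. Hence the circuit families of $M(\G)$ and $M(\G^\lambda)$ coincide, so $M(\G^\lambda)=M(\G)$ as matroids on the common ground set $E_G$.

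The real content has already been packaged into the preceding proposition, so there is no substantial obstacle left. The one point one should be explicit about is the implicit convention identifying the matroid attached to a gain graph $\G$ with the matroid attached to the biased graph $[\G]$; with this in hand, the corollary reduces to combining Lemma 5.2 with the circuit description of Theorem 2.1. If desired, the argument can be shortened to a single line: $M(\G^\lambda)=M([\G^\lambda])=M([\G])=M(\G)$, where the middle equality is Lemma 5.2 applied inside the functor $M(\cdot)$ of Theorem 2.1.
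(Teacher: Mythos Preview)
Your proof is correct and follows exactly the route the paper indicates: it deduces the corollary from the preceding proposition together with Theorem~\ref{theo:biasedmatroiddef}, by observing that switching preserves the underlying edge set and the list of circuits. The one-line alternative $M(\G^\lambda)=M([\G^\lambda])=M([\G])=M(\G)$ via Lemma~5.2 is also fine and amounts to the same argument.
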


\subsection{Hyperplane arrangement realizations of gain graphs}
In this subsection, we will consider $K$ a field, $\G= (G, \varphi)$ a gain graph with gain group $K^*$, and $\mathcal{V}_G=\{1,\dots,\ell\}$. 
\begin{definition} Let $\A(\G)$ be the hyperplane arrangement in $K^\ell$ consisting of the following hyperplanes
$$\{x_i=\varphi(\mathtt{e}_{ij})x_j\}\ \text{ for } \mathtt{e}_{ij}\in \mathcal{E}_G.$$
We will call $\A(\G)$ the \textbf{canonical linear hyperplane representation} of $\G$.
\end{definition}
Notice that since we assume that every loop is unbalanced, then if $\mathtt{e}_{ii}$ is a loop, we have $\varphi(\mathtt{e}_{ii})\ne1$, and hence we attach to it the hyperplane $\{x_i=0\}$. Moreover, since all $2$-circles are unbalanced, the hyperplanes of $\A(\G)$ are all distinct.
\begin{example} Consider the gain graph described in Example \ref{ex:gaingraphexamp}. Then we obtain the hyperplane arrangement $\A(\G)\subseteq\mathbb{R}^3$ with defining equation $x(x-y)(x-2y)(x-3y)(y-z)(2y-z)(x-z)$.
\end{example}
Given a gain graph $\G$, we can now associate to it two matroids: the frame matroid and the matroid associated to the intersection lattice of $\A(\G)$. In \cite{zaslavsky2003biased}, Zaslavsky proved that these two matroids coincide. In particular, he proved the following.
\begin{theorem}\cite[Corollary 2.2]{zaslavsky2003biased}\label{theo:GandAGmatroid} $M(\G)\cong M(\A(\G))$.
\end{theorem}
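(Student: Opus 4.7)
The plan is to exhibit the natural bijection between the edges of $\G$ and the hyperplanes of $\A(\G)$—sending a link $e=\{i,j\}$ with gain $g=\varphi(e)$ to $H_e=\{x_i=gx_j\}$ (defining form $\alpha_e=x_i-gx_j$) and a loop $e=\{i\}$ to $\{x_i=0\}$ (defining form $\alpha_e=x_i$)—and to prove that this bijection is a matroid isomorphism by checking that the rank functions coincide on every edge subset $S\subseteq E_G$. For the bias matroid one has the standard frame-matroid formula $r_{\G}(S)=|V(S)|-b(S)$, where $V(S)$ denotes the set of vertices incident to some edge of $S$ and $b(S)$ denotes the number of balanced connected components of the spanning subgraph $(V(S),S)$; on the arrangement side the rank is $\dim\spann\{\alpha_e\mid e\in S\}$.

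First, I would reduce to a convenient normal form by exploiting switching. A switching $\lambda\colon V_G\to\mathfrak{G}$ corresponds to the linear change of coordinates $x_v\mapsto\lambda(v)x_v$ on $K^{\ell}$, which identifies $\A(\G)$ with $\A(\G^{\lambda})$ up to linear equivalence; hence $M(\A(\G))$ is switching-invariant, and combining this with Corollary \ref{corol:switchinggivesamematroid} allows us to switch $\G$ freely when computing either rank. Since both rank functions are additive over the decomposition of a subgraph into its connected components, it also suffices to treat the case of connected $S$.

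Now I would split into two cases. In the balanced case, pick a spanning tree $T$ of $S$ and switch so that every tree edge carries gain $1$; because $S$ is balanced, every remaining edge then also acquires gain $1$, so the forms $\alpha_e=x_i-x_j$ are those of the connected graphic arrangement on $V(S)$, spanning a subspace of dimension $|V(S)|-1=|V(S)|-b(S)$. In the unbalanced case, again switch along a spanning tree $T$ so that $\alpha_e=x_i-x_j$ for $e\in T$. These tree-forms span the codimension-one subspace of linear forms on $V(S)$ with vanishing coefficient sum; moreover, since $S$ is connected and unbalanced, after switching there must exist either an unbalanced loop $e=\{i\}$ (whose form $x_i$ has coefficient sum $1$) or a non-tree link $e=\{i,j\}$ whose fundamental circle has gain $g\neq 1$ (so $x_i-gx_j$ has coefficient sum $1-g\neq 0$). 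Either way, the additional form contributes the missing dimension, giving total rank $|V(S)|=|V(S)|-b(S)$.

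The main technical step is the unbalanced case: one must verify that, under the chosen switching along $T$, the existence of an unbalanced circle in $S$ is equivalent to the existence of a non-tree edge whose gain differs from $1$, and that such an edge produces a form whose coefficient sum escapes the span of the tree-forms. Granting this verification, the two rank functions agree on every $S$, so the edge-to-hyperplane bijection is an isomorphism of matroids, proving $M(\G)\cong M(\A(\G))$.
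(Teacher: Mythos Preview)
The paper does not prove this theorem at all: it is quoted verbatim as Corollary~2.2 of Zaslavsky's \emph{Biased Graphs~IV} and used as a black box. There is therefore no ``paper's own proof'' to compare against.

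That said, your argument is a correct and standard way to establish the result. The rank-function comparison you outline---using the frame-matroid formula $r_{\G}(S)=|V(S)|-b(S)$, reducing to connected pieces, normalising gains to $1$ along a spanning tree via switching, and then splitting into the balanced and unbalanced cases---is essentially how one proves that the canonical linear representation realises the bias matroid. The balanced case gives the ordinary graphic-arrangement rank $|V(S)|-1$, and in the unbalanced case your observation that some surviving form has nonzero coefficient sum (either $x_i$ from a loop or $x_i-gx_j$ with $g\neq 1$) is exactly the mechanism that pushes the span up to the full $|V(S)|$. The one point worth stating more carefully is the equivalence you flag at the end: after switching along $T$, the gain of the fundamental circle of a non-tree edge $e$ equals $\varphi(e)$, so the component is unbalanced if and only if some non-tree edge retains a nontrivial gain (or a loop is present); this is immediate but should be said explicitly.

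In short: your proposal is sound, but there is nothing in the present paper to benchmark it against.
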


\begin{proposition}\label{prop:biaseqsamefalkinv}
Let $\G_1$ and $\G_2$ be two gain graphs such that $\langle\G_1\rangle=\langle\G_2\rangle$. Then $\phi_3(\A(\G_1)) = \phi_3(\A(\G_2))$.
\end{proposition}
\begin{proof} By Proposition~\ref{prop:biaseqsamematroid} and Theorem~\ref{theo:GandAGmatroid}, $M(\A(\G_1))\cong M(\A(\G_2))$. This implies that $\A(\G_1)$ and $\A(\G_2)$ have isomorphic Orlik--Solomon algebra, and hence they have the same Falk invariant $\phi_3$.
\end{proof}

Similarly as in the case of signed graph (see Corollary 3.11 in \cite{guo2017falk}), by Proposition~\ref{prop:biaseqsamefalkinv} and Lemma~\ref{lemma:samebiasmatr} we have the following
\begin{corollary}\label{corol:switchingsamefalkinv} Let $\G_1$ and $\G_2$ be two switching equivalent gain graphs. Then $\phi_3(\A(\G_1)) = \phi_3(\A(\G_2))$.
\end{corollary}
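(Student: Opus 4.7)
The plan is to chain together two observations: switching equivalence preserves the bias matroid, and the Falk invariant of an arrangement is in fact a matroid invariant. By Corollary \ref{corol:switchinggivesamematroid}, if $\G_1$ and $\G_2$ are switching equivalent gain graphs on the same underlying graph, then $M(\G_1)=M(\G_2)$. Applying Zaslavsky's theorem $M(\G)\cong M(\A(\G))$ to both sides gives $M(\A(\G_1))\cong M(\A(\G_2))$. So the intersection matroids of the two hyperplane arrangements coincide.

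With that in hand, the remaining task is to show that $\phi_3(\A)$ depends only on the underlying matroid of $\A$. For this I would appeal to the reformulation in Remark \ref{rem:falkinvariantreduct},
\[
\phi_3 \;=\; 2\binom{n+1}{3} - n\dim(A^2) + \binom{n}{3} - \dim(I^3_2).
\]
The number $n$ of hyperplanes equals the number of points of the matroid, so it is manifestly matroidal. For $\dim(A^2)$ and $\dim(I^3_2)$, I would note that (in the central case we are in) the Orlik-Solomon ideal is generated by the boundaries $\partial e_S$ with $S$ dependent, and dependence of a subset is by definition a matroid notion. Hence both $A^\bullet(\A)$ and $A^\bullet_2(\A)$, together with all their graded pieces, are determined up to isomorphism by $M(\A)$; this is the classical theorem of Orlik and Solomon. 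Combining the two steps, $\phi_3(\A(\G_1)) = \phi_3(\A(\G_2))$, as desired.

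There is essentially no obstacle here: the corollary is a formal consequence of Corollary \ref{corol:switchinggivesamematroid}, Zaslavsky's identification, and the matroid-invariance of Falk's formula. The only point one has to be a little careful about is to state the matroid-invariance of $\phi_3$ explicitly, since it is this (rather than any direct manipulation of the switching function $\lambda$) that makes the argument go through. Given that the authors reference an analogous argument in the signed-graph setting (Corollary 3.11 of \cite{guo2017falk}), I would expect the written proof to be just a few lines invoking exactly these ingredients.
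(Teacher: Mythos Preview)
Your argument is correct and matches the paper's approach: the corollary is stated immediately after Zaslavsky's theorem with the remark that it follows, ``similarly as in the case of signed graphs,'' from that theorem together with Corollary~\ref{corol:switchinggivesamematroid}. Your extra care in spelling out why $\phi_3$ is a matroid invariant (via the Orlik--Solomon algebra and Remark~\ref{rem:falkinvariantreduct}) just makes explicit what the paper leaves implicit.
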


\section{List of distinguished biased graphs}\label{sect:listgaingraphimportant}
In this section, we will describe all the gain graphs that we need to express our main theorem. Since we will consider $K^*=\mathbb{Q}^*$, we will describe the underlying graph, together with the list of balanced circles. 

\begin{figure}[h!]
\centering
\subfigure[]
{
\begin{tikzpicture}[baseline=10pt]
\draw (0,1.5) node[v](1){};
\draw (3,1.5) node[v](2){};
\draw (0,1) node {$v_1$};
\draw (3,1) node {$v_2$};
\draw[scale=2,->>] (1)  to[in=225,out=135,loop] (1);
\draw[bend right, ->>] (1) to (2);
\draw[bend left, <<-] (1) to (2);
\draw (1.5,2.2) node {2};
\draw (1.5, 0.8) node {3};
\draw (-0.4,2) node {-1};
\end{tikzpicture}
}
\hspace{14mm}
\subfigure[]
{
\begin{tikzpicture}[baseline=10pt]
\draw (0,1.5) node[v](1){};
\draw (3,1.5) node[v](2){};
\draw (0,1) node {$v_1$};
\draw (3,1) node {$v_2$};
\draw[scale=2,->>] (1)  to[in=225,out=135,loop] (1);
\draw[scale=2,->>] (2)  to[in=45,out=315,loop] (2);
\draw[bend right,->>] (1) to (2);
\draw[bend left,<<-] (1) to (2);
\draw (-0.4,2) node {-1};
\draw (1.5,2.2) node {2};
\draw (1.5, 0.8) node {3};
\draw (3.4,2) node {-1};
\end{tikzpicture}
}
\hspace{14mm}
\subfigure[]
{
\begin{tikzpicture}[baseline=10pt]
\draw (1.7,3) node[v,label=right:{$v_1$}](1){};
\draw (0,0) node[v,label=left:{$v_2$}](2){};
\draw (3.4,0) node[v,label=right:{$v_3$}](3){};
\draw[] (1)--(2);
\draw[] (1)--(3);
\draw[] (2)--(3);
\draw[bend right,<<-] (1) to (2);
\draw[bend left,->>] (1) to (3);
\draw[bend right,<<-] (2) to (3);
\draw (0.1,1.7) node {-1};
\draw (1,1.3) node {1};
\draw (2.4,1.3) node {1};
\draw (3.3,1.7) node {-1};
\draw (1.7,0.2) node {1};
\draw (1.7,-0.7) node {-1};
\end{tikzpicture}
}
\caption{List of underlying graphs.}\label{Fig:disgraphs1}
\end{figure}

\begin{figure}[h!]
\centering
\subfigure[]
{
\begin{tikzpicture}[baseline=10pt]
\draw (1.7,3) node[v,label=right:{$v_1$}](1){};
\draw (0,0) node[v,label=left:{$v_2$}](2){};
\draw (3.4,0) node[v,label=right:{$v_3$}](3){};
\draw[] (1)--(2);
\draw[] (1)--(3);
\draw[] (2)--(3);
\draw[bend right,<<-] (1) to (2);
\draw[bend left,->>] (1) to (3);
\draw[scale=2,->>] (1)  to[in=135,out=45,loop] (1);
\draw (0.1,1.7) node {-1};
\draw (1,1.3) node {1};
\draw (2.4,1.3) node {1};
\draw (3.3,1.7) node {-1};
\draw (1.7,0.2) node {1};
\draw (1.2,3.4) node {-1};
\end{tikzpicture}
}
\hspace{14mm}
\subfigure[]
{
\begin{tikzpicture}[baseline=10pt]
\draw (1.7,3) node[v,label=right:{$v_1$}](1){};
\draw (0,0) node[v,label=left:{$v_2$}](2){};
\draw (3.4,0) node[v,label=right:{$v_3$}](3){};
\draw[] (1)--(2);
\draw[] (1)--(3);
\draw[] (2)--(3);
\draw[bend right,<<-] (1) to (2);
\draw[bend left,->>] (1) to (3);
\draw[bend right,<<-] (2) to (3);
\draw[scale=2,->>] (1)  to[in=135,out=45,loop] (1);
\draw (0.1,1.7) node {-1};
\draw (1,1.3) node {1};
\draw (2.4,1.3) node {1};
\draw (3.3,1.7) node {-1};
\draw (1.7,0.2) node {1};
\draw (1.7,-0.7) node {-1};
\draw (1.2,3.4) node {-1};
\end{tikzpicture}
}
\hspace{14mm}
\subfigure[]
{
\begin{tikzpicture}[baseline=10pt]
\draw (1.7,3) node[v,label=right:{$v_1$}](1){};
\draw (0,0) node[v,label=left:{$v_2$}](2){};
\draw (3.4,0) node[v,label=right:{$v_3$}](3){};
\draw[<<-] (1)--(2);
\draw[] (1)--(3);
\draw[] (2)--(3);
\draw[->>] (1) .. controls (-1,2) .. (2);
\draw[bend left,<<-] (1) to (3);
\draw[bend right,<<-] (2) to (3);
\draw[bend right] (1) to (2);
\draw[bend right,->>] (1) to (3);
\draw (-0.3,1.8) node {2};
\draw (0.5,1.55) node {1};
\draw (1,1.3) node {2};
\draw (2.4,1.3) node {2};
\draw (2.8,1.55) node {1};
\draw (3.3,1.8) node {2};
\draw (1.7,0.2) node {1};
\draw (1.7,-0.7) node {2};
\end{tikzpicture}
}
\hspace{14mm}
\subfigure[]
{
\begin{tikzpicture}[baseline=10pt]
\draw (1.7,3) node[v,label=right:{$v_1$}](1){};
\draw (0,0) node[v,label=left:{$v_2$}](2){};
\draw (3.4,0) node[v,label=right:{$v_3$}](3){};
\draw[<<-] (1)--(2);
\draw[] (1)--(3);
\draw[<<-] (2)--(3);
\draw[->>] (1) .. controls (-1,2) .. (2);
\draw[bend left,<<-] (1) to (3);
\draw[bend right] (2) to (3);
\draw[bend right] (1) to (2);
\draw[bend right,->>] (1) to (3);
\draw (-0.3,1.8) node {2};
\draw (0.5,1.55) node {1};
\draw (1,1.3) node {2};
\draw (2.4,1.3) node {2};
\draw (2.8,1.55) node {1};
\draw (3.3,1.8) node {2};
\draw (1.7,0.2) node {2};
\draw (1.7,-0.3) node {1};
\draw (1.7,-0.9) node {4};
\draw[->>] (2) .. controls (1.7,-1.5) .. (3);
\end{tikzpicture}

}
\caption{List of underlying graphs.}\label{Fig:disgraphs2}
\end{figure}
\begin{enumerate}
\item The biased graph $1K_n$ has as underlying graph the complete graph on $n$ vertices and it is a balanced.
\item The biased graph $1K_n^\circ$ is the graph $1K_n$ with an unbalanced loop at every vertex.
\item The biased graph $\pm 1K_2^{(1)}$ is the one associated with the gain graph depicted in Figure \ref{Fig:disgraphs1}(a) and it is contrabalanced. 
\item The biased graph $\pm 1K_2^\circ$ is the one associated with the gain graph depicted in Figure \ref{Fig:disgraphs1}(b) and it is contrabalanced. 
\item The biased graph $\pm 1K_3$ is the one associated with the gain graph depicted in Figure \ref{Fig:disgraphs1}(c). 
\item The biased graph $\pm 1K_3^{(1)}\setminus \mathtt{e}_{23}(-1)$is the one associated with the gain graph depicted in Figure \ref{Fig:disgraphs2}(a).
\item The biased graph $\pm 1K_3^{(1)}$ is the one associated with the gain graph depicted in Figure \ref{Fig:disgraphs2}(b). 
\item The biased graph $\mathbb{Z}_3K_3\setminus \mathtt{e}$ is the one associated with the gain graph depicted in Figure \ref{Fig:disgraphs2}(c). 
\item The biased graph $\mathbb{Z}_3K_3$ is the one associated with the gain graph depicted in Figure \ref{Fig:disgraphs2}(d). 
\end{enumerate}

Notice that by construction we have the following
\begin{lemma}\label{lemma:g1g2notd3inside} The graphs $\mathbb{Z}_3K_3\setminus \mathtt{e}$ and $\mathbb{Z}_3K_3$ do not have any subgraphs isomorphic to $\pm 1K_3$.
\end{lemma}
\begin{remark}
The graphs $1K_3,$ $1K_2^\circ,$ and $\pm 1K_2^{(1)}$ have isomorphic frame matroids, and the frame matroids of the graphs $1K_4$, $1K_3^\circ$, $\pm 1K_3 $, and $\pm 1K_3^{(1)}\setminus \mathtt{e}_{23}(-1)$ are also isomorphic, see Section $6.$
\end{remark}

\section{Main theorem}
In this section, we will describe how to compute the Falk invariant $\phi_3$ for $\mathcal{A}(\G)$, an arrangement associated to a gain graph $\G$ such that $\langle\G\rangle$ does not have a subgraph isomorphic to $\pm 1K_2^\circ$, it has at most triple parallel edges and it has no loops adjacent to a theta graph with three edges. Moreover, we will assume that all $2$-circles and loops of $\G$ are unbalanced.  

In the remainder of the paper, to fix the notation, we will suppose $\G$ is a gain graph whose underlying graph $|\G|$ is on $\ell$ vertices having $n$ edges. Since our result will only depend on $\langle\G\rangle$ and not on the specific gain value of the edges (see Proposition~\ref{prop:biaseqsamefalkinv}), we will label the edges of $|\G|$ as elements of $[n]:=\{1,\dots, n\}$. 

We define the numbers of some subgraphs of a graph $\langle\G\rangle$ as the following:
\begin{itemize}
\item[] $k_l$ denotes the number of subgraphs of $\langle\G\rangle$ isomorphic to a $1K_l$; 
\item[] $k_3^{\pm}$ denotes the number of subgraphs of $\langle\G\rangle$ isomorphic to $\pm 1K_3$ but not contained in $\pm 1K_3^{(1)}$;
\item[] $k_l^{(1)}$ denotes the number of subgraphs of $\langle\G\rangle$ isomorphic to $\pm 1K_l^{(1)}$;
\item[] $k_l^\circ$ denotes the number of subgraphs of $\langle\G\rangle$ isomorphic to a $1K_l^\circ$;
\item[] $g_0$ denotes the number of subgraphs of $\langle\G\rangle$ isomorphic to a $\pm 1K_3^{(1)}\setminus \mathtt{e}_{23}(-1)$ but not contained in $\pm 1K_3^{(1)}$;
\item[] $g_1$ denotes the number of subgraphs of $\langle\G\rangle$ isomorphic to a $\mathbb{Z}_3K_3\setminus \mathtt{e}$ but not contained in $\mathbb{Z}_3K_3$;
\item[] $g_2$ denotes the number of subgraphs of $\langle\G\rangle$ isomorphic to a $\mathbb{Z}_3K_3$;
\item[] $\Theta$ counts contrabalanced triple parallel edges.
\end{itemize}

The goal of this section is to prove the following theorem.
\begin{theorem}\label{theo:ourmain}
For an arrangement associated to a gain graph $\G$ such that $\langle\G\rangle$ has no subgraph isomorphic to $\pm 1K_2^\circ$ or to a contrabalanced triple parallel edge with an adjacent loop and has at most triple parallel edges, we have
\begin{equation}\label{eq:ourmainformula}
\phi_3=2(k_3+k_4+k_3^{\pm}+k_2^{(1)}+k_2^\circ+k_3^\circ+g_0+g_2+\Theta)+5k_3^{(1)}+g_1.
\end{equation}

\end{theorem}

\begin{remark} The restrictions on $\G$ in Theorem~\ref{theo:ourmain} are exactly that the associated matroid has no submatroid isomorphic to the $4$-point line. 
For this reason, we will say that $\G$ has no $U_{2,4}$ subgraphs.
\end{remark}

In order to compute $\phi_3$, we will use Theorem \ref{theo:falkinvar}, hence we need firstly to identify the triples $S$ in $[n]$ that are dependent. Clearly, we have the following
\begin{lemma} $S=(i_1, i_2, i_3)$ is dependent if and only if $i_1, i_2, i_3$ correspond to the edges of a subgraph of $\langle\G\rangle$ that is isomorphic to $1K_3$, or $\pm 1K_2^{(1)}$ or a contrabalanced theta graph with only three edges.
\end{lemma}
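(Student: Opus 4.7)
The plan is to reduce the claim to the matroid-theoretic description of the bias matroid and then enumerate by hand. Since $\A(\G)$ is central, the set of defining forms indexed by $S=\{i_1,i_2,i_3\}$ is dependent if and only if $S$ is dependent in the matroid $M(\A(\G))$. By Zaslavsky's isomorphism $M(\G)\cong M(\A(\G))$ (Corollary 2.2 in \cite{zaslavsky2003biased}), this is equivalent to $S$ being dependent in the bias matroid $M(\G)$, i.e.\ to $S$ containing a circuit of $M(\G)$. I would therefore first argue that $\G$ has no circuit of size strictly less than $3$, and then classify the circuits of size exactly $3$ using Theorem \ref{theo:biasedmatroiddef}.

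For the first step, every circuit of $M(\G)$ is a balanced circle, an unbalanced $\theta$-graph, an unbalanced loose handcuff, or an unbalanced tight handcuff. The standing hypothesis that every loop and every $2$-circle of $\G$ is unbalanced rules out balanced circles of length $1$ and $2$. A $\theta$-graph has at least three edges by definition; a loose handcuff consists of two vertex-disjoint circles (each of length $\ge 1$) joined by a path of positive length, and therefore has at least $1+1+1=3$ edges; and the only conceivable tight handcuff of size $2$ is two loops at a common vertex, but those two loops would both contribute the hyperplane $x_i=0$ to $\A(\G)$, contradicting the implicit convention that the arrangement has no repeated hyperplanes (equivalently, that $\G$ carries at most one loop per vertex). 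Hence no circuit of size $\le 2$ occurs, and $S$ is dependent precisely when $S$ is itself a $3$-circuit of $M(\G)$.

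For the second step I would enumerate the possible three-edge circuits. A balanced $3$-circle is simply a triangle whose unique circle is balanced, i.e.\ a subgraph isomorphic to $\overline{K_3}$. An unbalanced $\theta$-graph with exactly three edges forces each of the three defining paths to have length one, so it is a triple parallel link between two vertices forming a contrabalanced $\theta$-graph with only three edges. A three-edge unbalanced loose handcuff must have both circles of length $1$ and the connecting path of length $1$, so it consists of two loops at distinct vertices joined by a link, which is precisely a $\overline{K_2^2}$. Finally, a three-edge unbalanced tight handcuff must combine a loop and a $2$-circle meeting at a common vertex, matching $\overline{D_2^1}$. In the last two cases the standing hypothesis on loops and $2$-circles automatically makes the handcuff unbalanced. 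This exhausts all possibilities and yields the four families in the statement; the whole argument is essentially mechanical once Theorem \ref{theo:biasedmatroiddef} is in hand, and the only mildly delicate point is the exclusion of size-$2$ circuits carried out above.
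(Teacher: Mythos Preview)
Your argument is correct and supplies exactly the details the paper omits: the paper states this lemma with no proof beyond ``Clearly, we have the following,'' so there is no approach to compare against. Your reduction via $M(\G)\cong M(\A(\G))$ and Theorem~\ref{theo:biasedmatroiddef}, followed by the case analysis on circuits of size at most three, is precisely what is needed, and your handling of the size-$2$ case (two loops at a common vertex forcing a repeated hyperplane) is the one subtlety worth making explicit.
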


Since a dependent triple $S$ corresponds to a circuit of size $3$ in $M(\A(\G))$, we call such S a \textbf{$3$-circuit}. Moreover, we will write
$$\mathcal{C}_3:=\spann\{e_S\in E~|~S \text{ is a $3$-circuit}\}$$
which is a subset of $E$ as a vector space over $\C$.

\begin{remark} Notice that the contrabalanced theta graphs, loose handcuffs, and tight handcuffs are subdivisions of the unbalanced $3$- circuits. In particular, If $\G_1$ and $\G_2$ are two switching equivalent gain graphs with the same underlying graph, then $\mathcal{C}_3(\G_1)=\mathcal{C}_3(\G_2)$, see Proposition \ref{prop:biaseqsamematroid}.
\end{remark}

Since $e_ie_je_k=-e_je_ie_k$, it is clear that the dimension of the vector space $\mathcal{C}_3$ is $k_3+k_2^{(1)}+k_2^\circ+\Theta$. Let $C'_3$ be a basis of $\mathcal{C}_3$ consisting of monomials in one-to-one correspondence with the subgraphs of $\langle\G\rangle$ isomorphic to a $1K_3$, or a $\pm 1K_2^{(1)}$ or a $1K_2^\circ$ or a contrabalanced theta graph with only three edges.

\begin{lemma}\label{lem:dimA2} $\dim(A^2)=\binom{n}{2}-k_3-k_2^{(1)}-k_2^\circ-\Theta$.
\end{lemma}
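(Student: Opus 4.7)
The plan is to compute $\dim(A^2) = \dim(E^2) - \dim(I^2) = \binom{n}{2} - \dim(I^2)$ and to show $\dim(I^2) = \dim(\mathcal{C}_3) = k_3 + d_{2,1} + k_{2,2} + \Theta$. First I would describe $I^2$ explicitly: since $\A$ is central and all hyperplanes are distinct, no $2$-element subset of $[n]$ is dependent, so the only generators of the Orlik-Solomon ideal that can contribute in degree $2$ are $\partial e_S$ with $|S|=3$, i.e., with $S$ a triangle. Hence $I^2 = \partial(\mathcal{C}_3)$, and it remains to prove that the restriction $\partial|_{\mathcal{C}_3}\colon \mathcal{C}_3 \to E^2$ is injective.

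The key combinatorial ingredient is the claim that, under the hypotheses of the main theorem, \emph{any two distinct triangles share at most one edge}. To prove this, I would suppose two distinct triangles $T_1, T_2$ share two edges $e, f$ and run through the four possible triangle types ($\overline{K_3}$, contrabalanced $3$-edge $\theta$-graph, $\overline{K_2^2}$, $\overline{D_2^1}$) for each of $T_1, T_2$. Two $\overline{K_3}$'s would force the two unshared edges to be parallel with identical gain, impossible for an arrangement. In every other case both triangles are supported on only two vertices, producing one of the following configurations of four edges on two vertices: four parallel edges (excluded by the triple-parallel-edges hypothesis); three parallel edges plus an adjacent loop (excluded by the hypothesis on loops adjacent to a $3$-edge $\theta$-graph); two loops at the same vertex (which define the same hyperplane, hence impossible in an arrangement); or two parallel edges plus one loop at each endpoint, which is exactly a subgraph isomorphic to $\overline{B_2}$ and therefore excluded. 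Each case contradicts the hypotheses, proving the claim.

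Given the claim, the injectivity of $\partial|_{\mathcal{C}_3}$ is immediate: for a triangle $S = \{i<j<k\}$ one has $\partial e_S = e_{jk} - e_{ik} + e_{ij}$, and the pair $\{j,k\}$ belongs to no other triangle, so in any linear relation $\sum_S c_S\,\partial e_S = 0$ the coefficient of $e_{jk}$ is $c_S$ and must vanish. Therefore $\dim(I^2) = \dim(\mathcal{C}_3) = k_3 + d_{2,1} + k_{2,2} + \Theta$, and the stated formula follows. The main obstacle is the case analysis verifying the sharing claim; although each subcase is elementary, one has to match each excluded local configuration to precisely one of the three hypotheses of the main theorem to confirm that no additional assumption is being smuggled in.
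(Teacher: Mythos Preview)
Your proof is correct and follows the same outline as the paper's, but it is considerably more careful. The paper's argument is a two-line assertion: it writes $I^2=\spann\{\partial e_{ijk}\mid e_{ijk}\in\mathcal{C}_3\}$ and then immediately concludes $\dim(I^2)=k_3+d_{2,1}+k_{2,2}+\Theta$, without justifying that the $\partial e_{ijk}$ are linearly independent. You supply exactly this missing step, via the claim that any two distinct triangles share at most one edge; the paper does state (without the case analysis) this very claim, but only later, in the proof of the lemma that follows. Your explicit use of the three hypotheses of the main theorem is also appropriate here: the equality $\dim(I^2)=|\mathcal{C}_3|$ fails for a rank-$2$ flat containing four or more hyperplanes, which is precisely what those hypotheses rule out. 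So your approach is the paper's approach, made rigorous.
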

\begin{proof} By definition $A=E/I$, hence 
$$\dim(A^2)=\dim(E^2)-\dim(I^2)=\binom{n}{2}-\dim(I^2).$$
By construction $I^2=\spann\{\partial e_{ijk}~|~e_{ijk}\in\mathcal{C}_3\}$. Notice that if $e_1,\dots, e_s$ are distinct elements of $C'_3$ then the corresponding subgraphs of $\langle\G\rangle$ share at most one edge and hence $\partial e_1, \dots, \partial e_s$ are linearly independent. This implies that  $\dim(I^2)=\dim(\mathcal{C}_3)=k_3+k_2^{(1)}+k_2^\circ+\Theta$, and the thesis follows.
\end{proof}
Using Theorem \ref{theo:falkinvar} and Remark \ref{rem:falkinvariantreduct}, to prove Theorem \ref{theo:ourmain}, we just need to describe $\dim((I_2)^3)$. To do so, consider 
$$C_3:=\{e_t\partial e_{ijk}~|~e_{ijk}\in C'_3,t\in\{i,j,k\}\},$$
and
$$F_3:=\{e_t\partial e_{ijk}~|~e_{ijk}\in C'_3,t\in[n]\setminus\{i,j,k\}\}.$$

By construction $(I_2)^3=I^2\cdot E^1=\spann\{e_t\partial e_{ijk}~|~e_{ijk}\in C'_3,t\in[n]\}$, and hence
$$(I_2)^3=\spann(C_3)+\spann(F_3).$$
\begin{lemma}\label{lemma:firstbreakdirectsum} For an arrangement associated to a gain graph $\G$ without $U_{2,4}$ subgraphs,
we have
$$(I_2)^3=\spann(C_3)\oplus\spann(F_3).$$
\end{lemma}
\begin{proof} Since $\langle\G\rangle$ does not contain a $\pm 1K_2^\circ$ as subgraph or loops adjacent to a theta graph or quadruple parallel edges, any two $3$-circuits share at most one element. Moreover, if $e_t\partial e_{ijk}\in C_3$, then $e_t\partial e_{ijk}=\pm e_{ijk}$ with $i,j,k$ edges of the same $3$-circuit. On the other hand,  if $e_t\partial e_{ijk}\in F_3$, then $e_t\partial e_{ijk}= e_{tjk}-e_{tik}+e_{tij}$ and $t$ does not belong to the same $3$-circuit as $i,j,k$. This then gives us that $\spann(C_3)\cap\spann(F_3)=\{0\}$.
\end{proof}
\begin{remark} Notice that if we allow $\langle\G\rangle$ to have subgraphs isomorphic to $\pm 1K_2^\circ$ or a loop adjacent to a theta graph or quadruple parallel edges, then the previous lemma is not true any more.
\end{remark}
By Lemma~\ref{lemma:firstbreakdirectsum}, we can write
\begin{align*}
\dim((I_2)^3) & =\dim(\spann(C_3))+\dim(\spann(F_3))\\
& =k_3+k_2^{(1)}+k_2^\circ+\Theta+\dim(\spann(F_3)).
\end{align*}
To prove our main result we need to compute $\dim(\spann(F_3))$. To do so, consider the following sets:
\begin{align*}
F^1_3 := & \{e_t\partial e_{ijk}\in F_3~|~t,i,j,k \text{ are not in the same }1K_4,\pm 1K_3; \\
 & \pm 1K_3^{(1)}\setminus \mathtt{e}_{23}(-1), \pm 1K_3^{(1)}, 1K_3^\circ,\mathbb{Z}_3K_3\setminus \mathtt{e},\mathbb{Z}_3K_3\};\\
F^2_3 := & \{e_t\partial e_{ijk}\in F_3~|~t,i,j,k \text{ are in the same }1K_4\}; \\
F^3_3 := & \{e_t\partial e_{ijk}\in F_3~|~t,i,j,k \text{ are in the same }\pm 1K_3 \text{ not contained }\\
 &\text{ in a }\pm 1K_3^{(1)}\};\\
F^4_3 := & \{e_t\partial e_{ijk}\in F_3~|~t,i,j,k \text{ are in the same }\pm 1K_3^{(1)}\setminus \mathtt{e}_{23}(-1) \text{ not} \\
 &\text{ contained in a }\pm 1K_3^{(1)}\};\\
F^5_3 := & \{e_t\partial e_{ijk}\in F_3~|~t,i,j,k \text{ are in the same }\pm 1K_3^{(1)}\};\\
F^6_3 := & \{e_t\partial e_{ijk}\in F_3~|~t,i,j,k \text{ are in the same } 1K_3^\circ\};\\
F^7_3 := & \{e_t\partial e_{ijk}\in F_3~|~t,i,j,k \text{ are in the same }\mathbb{Z}_3K_3\setminus \mathtt{e} \text{ not contained } \\
  &\text{ in a }\mathbb{Z}_3K_3\};\\
F^8_3 := & \{e_t\partial e_{ijk}\in F_3~|~t,i,j,k \text{ are in the same }\mathbb{Z}_3K_3\}.
\end{align*}

\begin{lemma} For an arrangement associated to a gain graph $\G$ without $U_{2,4}$ subgraphs,
we have
$$\spann(F_3)=\bigoplus_{i=1}^8\spann(F^i_3).$$
\end{lemma}
\begin{proof} By hypothesis, $\langle\G\rangle$ does not contain a subgraph isomorphic to $\pm 1K_2^\circ$ or loops adjacent to a theta graph with only three edges or quadruple parallel edges. Moreover, if $e_t\partial e_{ijk}\in F^p_3$ and $e_a\partial e_{bcd}\in F^q_3$ with $p,q\ge2$ and $p\ne q$, then by with Lemma \ref{lemma:g1g2notd3inside} the corresponding graphs share at most $3$ edges. This implies that at least one term of $e_t\partial e_{ijk}\in F^p_3$ appears only in the expression of $e_t\partial e_{ijk}\in F^p_3$ and not in the expression of any other element in $\bigcup_{\substack{2\le i \le 8\\ i\ne p}}(F^i_3)$. So $e_t\partial e_{ijk}$ can not be expressed linearly by the elements of $\bigcup_{\substack{2\le i \le 8\\ i\ne p}}(F^i_3)$.

For any element $e_t\partial e_{ijk}$ of $F^1_3$, we assert that at least one of the terms $e_{tjk}, e_{tik}, e_{tij}$ appears only in the expression of $e_t\partial e_{ijk}\in F^1_3$ and not in the expression of any other element in $\bigcup_{i=2}^8\spann(F^i_3)$. So $e_t\partial e_{ijk}$ can not be expressed linearly by the elements of $F^2_3, \dots, F^8_3$.

Since the edges $t, i, j, k$ are not in the same $1K_4,\pm 1K_3,\pm 1K_3^{(1)}\setminus \mathtt{e}_{23}(-1),\pm 1K_3^{(1)}, 1K_3^\circ,\mathbb{Z}_3K_3\setminus \mathtt{e},\mathbb{Z}_3K_3$, and we do not consider the graphs having subgraphs isomorphic to $\pm 1K_2^\circ$ or loops adjacent to a theta graph with only three edges or quadruple parallel edges, we should only consider three cases about the edge $t$: it can be adjacent to none of the edges $i,j,k$, to two of them, or to all of them.

Assume that the edge $t$ is adjacent to none of the edges $i,j,k$. This implies that $t$ and none of $i,j,k$ can appear in the same $3$-circuit. Hence any element $e_t\partial e_{ijk}$ of $F^1_3$ will not appear in any of $F^2_3, \dots, F^8_3$.

Assume now that the edge $t$ is adjacent to two of the edges $i,j,k$, then we should consider several possibilities.
Suppose that in the set $\{t,i,j,k\}$ there is no loop. If all the terms of the element $e_t\partial e_{ijk}\in F^1_3$ appear in $F^2_3, \dots, F^8_3$, then $t,i,j,k$ have to appear in the same $1K_4$, but this is impossible by construction.
Suppose that $t$ is a loop and there is no loop in the set $\{i,j,k\}$. If all the terms of the element $e_t\partial e_{ijk}\in F^1_3$ appear in $F^2_3, \dots, F^8_3$, then $t,i,j,k$ have to appear in the same $\pm 1K_3^{(1)}\setminus e_{23}(-1)$ or in the same $\pm 1K_3^{(1)}$, but this is impossible by construction.
Suppose that $t$ is not a loop and there is one loop in the set $\{i,j,k\}$. In this case $i,j,k$ are the edges of a $\pm 1K_2^{(1)}$. Hence, by assumption, the edges $t$ is not adjacent to the loop. If all the terms of the element $e_t\partial e_{ijk}\in F^1_3$ appear in $F^2_3, \dots, F^8_3$, then, also in this case, $t,i,j,k$ have to appear in the same $\pm 1K_3^{(1)}\setminus \mathtt{e}_{23}(-1)$ or in the same $\pm 1K_3^{(1)}$, but this is impossible by construction.
Suppose that $t$ is not a loop and there are two loops in the set $\{i,j,k\}$. In this case $i,j,k$ are the edges of a $1K_2^\circ$. If all the terms of the element $e_t\partial e_{ijk}\in F^1_3$ appear in $F^2_3, \dots, F^8_3$, then $t,i,j,k$ have to appear in the same  $1K_3^\circ$, but this is impossible by construction.

Finally, assume that the edge $t$ is adjacent to all the edges $i,j,k$. Since the underlying graph has at most triple edges and no loops adjacent to a theta graph with only three edges, then in this situation, there are just two cases we should consider.
Suppose that in the set $\{t,i,j,k\}$ there is no loop. If all the terms of the element $e_t\partial e_{ijk}\in F^1_3$ appear in $F^2_3, \dots, F^8_3$, then $t,i,j,k$ have to appear in the same $\pm 1K_3$ or in the same $\mathbb{Z}_3K_3\setminus \mathtt{e}$ or in the same $\mathbb{Z}_3K_3$, but this is impossible by construction.
Suppose that $t$ is not a loop and there is one loop in the set $\{i,j,k\}$. In this case $i,j,k$ are the edges of a $\pm 1K_2^{(1)}$. If all the terms of the element $e_t\partial e_{ijk}\in F^1_3$ appear in $F^2_3, \dots, F^8_3$, then $t,i,j,k$ have to appear in the same $\pm 1K_3^{(1)}\setminus \mathtt{e}_{23}(-1)$ or in the same $\pm 1K_3^{(1)}$, but this is impossible by construction. 

This shows that if $e_t\partial e_{ijk}\in F^p_3$, then can not be expressed linearly by the elements of $\bigcup_{\substack{1\le i \le 8\\ i\ne p}}(F^i_3)$. Since clearly
$$\spann(F_3)=\sum_{i=1}^8\spann(F^i_3)$$
this concludes the proof.
\end{proof}
 In the following example we compute the dimension of $\spann(F_3)$ for the arrangement $\A(\pm 1K_3^{(1)}\setminus \mathtt{e}_{23}(-1))$ associated to the gain graph $\G$ in Figure \ref{Fig:disgraphs2}(a), where  $\langle\G\rangle=\pm 1K_3^{(1)}\setminus \mathtt{e}_{23}(-1)$.
\begin{example}\label{ex:G0comput} Let $\langle\G\rangle=\pm 1K_3^{(1)}\setminus e_{23}(-1)$. Consider the hyperplane arrangement $\A(\G)=\{H_1, \dots, H_6\},$ where $H_1,\dots,H_6$ correspond to the edges $\mathtt{e}_{21}(-1), \mathtt{e}_{12}(1), \mathtt{e}_{13}(1),$ $\mathtt{e}_{13}(-1), e_{23}(1),$ and $\mathtt{e}_{11}(-1)$, respectively. Then the list of $3$-circuits $S$ is $\{126,145, 235, 346\}$. 
Then the number of the elements in $F_3$ is $12$, listed as follows:
$$ e_3 \partial e_{126}=-e_{236}+e_{136}+e_{123}, e_4 \partial e_{126}=-e_{246}+e_{146}+e_{124},$$
$$e_5 \partial e_{126}=-e_{256}+e_{156}+e_{125}, e_2 \partial e_{145}=e_{245}+e_{125}-e_{124},$$
$$e_3 \partial e_{145}=e_{345}+e_{135}-e_{134}, e_6 \partial e_{145}=e_{456}-e_{156}+e_{146},$$
$$e_1 \partial e_{346}=e_{146}-e_{136}+e_{134}, e_2 \partial e_{346}=e_{246}-e_{236}-e_{234},$$
$$e_5 \partial e_{346}=-e_{456}+e_{356}+e_{345}, e_1 \partial e_{235}=e_{135}-e_{125}+e_{123},$$
$$e_4 \partial e_{235}=-e_{345}+e_{245}+e_{234}, e_6 \partial e_{235}=e_{356}-e_{256}+e_{236}.$$
Then an easy computation shows that in this case $\dim(\spann(F_3))=10$. 
\end{example}
In this next example, we compute the dimension of $\spann(F_3)$ for the arrangement $\A(\mathbb{Z}_3K_3\setminus \mathtt{e})$ associated to the gain graph $\G$ in Figure \ref{Fig:disgraphs2}(c), where $\langle\G\rangle=\mathbb{Z}_3K_3\setminus \mathtt{e}$.
\begin{example}\label{ex:G_1comput} 
Let $\langle\G\rangle=\mathbb{Z}_3K_3\setminus \mathtt{e}$. Then the number $3$-circuits $S$ is $7$. 
This implies that the number of the elements in $F_3$ is $35$, and they are all the elements of the form
$$e_t\partial e_{ijk}=e_{tjk}-e_{tik}+e_{tij},$$
for each $3$-circuit $ijk$ and $t\notin\{i,j,k\}$.
Then a direct computation shows that in this case $\dim(\spann(F_3))=34$. 
\end{example}

\begin{remark}\label{rem:dimf3all} Similarly to the previous examples, we can directly compute $\dim(\spann(F_3))$ for all the distinguished gain graphs of Section \ref{sect:listgaingraphimportant}. In particular, if we consider the graphs $\pm 1K_3, 1K_4$ and $1K_3^\circ$, then $\dim(\spann(F_3))=10$. If we consider $\pm 1K_3^{(1)}$, then $\dim(\spann(F_3))=19$. Finally, if we consider $\mathbb{Z}_3K_3$, then $\dim(\spann(F_3))=52$. 
\end{remark}

\begin{lemma} We have the following equalities
\begin{itemize}
\item[] $\dim(\spann(F_3^2))=10k_4$, 
\item[] $\dim(\spann(F_3^3))=10k_3^{\pm}$, 
\item[] $\dim(\spann(F_3^4))=10g_0$, 
\item[] $\dim(\spann(F_3^5))=19k_3^{(1)}$, 
\item[] $\dim(\spann(F_3^6))=10k_3^\circ$, 
\item[] $\dim(\spann(F_3^7))=34g_1$, 
\item[] $\dim(\spann(F_3^8))=52g_2$.
\end{itemize}
\end{lemma}
\begin{proof} Assume that in the graph $\langle\G\rangle$ there are exactly $g_1=p$ distinct subgraphs isomorphic to a $\mathbb{Z}_3K_3\setminus \mathtt{e}$, $\G_1,\dots, \G_p$, none of which is a subgraph of a graph isomorphic to $\mathbb{Z}_3K_3$.  Consider
$$F^7_{3,i}:=\{e_t\partial e_{ijk}~|~e_{ijk}\in C'_3,t\in[n]\setminus\{i,j,k\}, i,j,k \in \G_i\}.$$
Since four edges in the underlying graph of $\langle\G\rangle$ can not appear in two distinct $\mathbb{Z}_3K_3\setminus \mathtt{e}$ at the same time, then none of the terms of the element $e_t\partial e_{ijk}\in F^7_{3,i}$ appear in the elements of $F_3^7\setminus F^7_{3,i}$. This shows that
$$\spann(F^7_3)=\bigoplus_{i=1}^p \spann(F^7_{3,i}).$$
By Proposition \ref{prop:biaseqsamefalkinv}, we have that $\dim(\spann(F^7_{3,i}))=34$ for all $i=1,\dots, p.$ This then implies that
$$\dim(\spann(F^7_3))=\sum_{i=1}^p \dim(\spann(F^7_{3,i}))=34g_1.$$

Using Remark \ref{rem:dimf3all}, the same exact argument used in this case will prove the other equalities.
\end{proof}

\begin{lemma}\label{lemm:dimI32} For an arrangement associated to a gain graph $\G$ without $U_{2,4}$ subgraphs,
 we have 
$$\dim((I_2)^3)=(n-2)(k_3+k_2^{(1)}+k_3^\circ+\Theta)-2k_4-2k_3^{\pm}-2g_0-2k_3^\circ-5k_3^{(1)}-g_1-2g_2.$$
\end{lemma}
\begin{proof} By the previous lemmas
$$\dim(\spann(F_3))=\sum_{i=1}^8 \dim(\spann(F^i_3))=$$
$$=[(n-3)(k_3+k_2^{(1)}+k_3^\circ+\Theta)-12k_4-12k_3^{\pm}-12g_0-12k_3^\circ-24k_3^{(1)}-35g_1-54g_2]+ $$
$$ +10k_4+10k_3^{\pm}+10g_0+10k_3^\circ+19k_3^{(1)}+34g_1+52g_2=$$
$$ (n-3)(k_3+k_2^{(1)}+k_3^\circ+\Theta)-2k_4-2k_3^{\pm}-2g_0-2k_3^\circ-5k_3^{(1)}-g_1-2g_2.$$
The thesis follows from the equality
$$\dim((I_2)^3) = k_3+k_2^{(1)}+k_2^\circ+\Theta+\dim(\spann(F_3)).$$
\end{proof}

\begin{proof}[Proof of Theorem \ref{theo:ourmain}]
By Remark \ref{rem:falkinvariantreduct} and Lemma \ref{lem:dimA2} we have
$$\phi_3=2\binom{n+1}{3}-n(\binom{n}{2}-k_3-k_2^{(1)}-k_2^\circ-\Theta)+\binom{n}{3}-\dim((I_2)^3).$$
Because $2\binom{n+1}{3}-n\binom{n}{2} +\binom{n}{3}=0$, then from Lemma \ref{lemm:dimI32} we obtain
$$\phi_3=2(k_3+k_4+k_3^{\pm}+k_2^{(1)}+k_2^\circ+k_3^\circ+g_0+g_2+\Theta)+5k_3^{(1)}+g_1.$$
\end{proof}

Let us see how our formula works on a non-trivial example.
\begin{example}\label{ex5} We want to compute $\phi_3$ for the arrangement associated to the gain graph $\G$ of Figure \ref{fig:finalexample}(a). In order to not create any confusion, in Figure \ref{fig:finalexample}(b) we labeled
each edge with a letter.

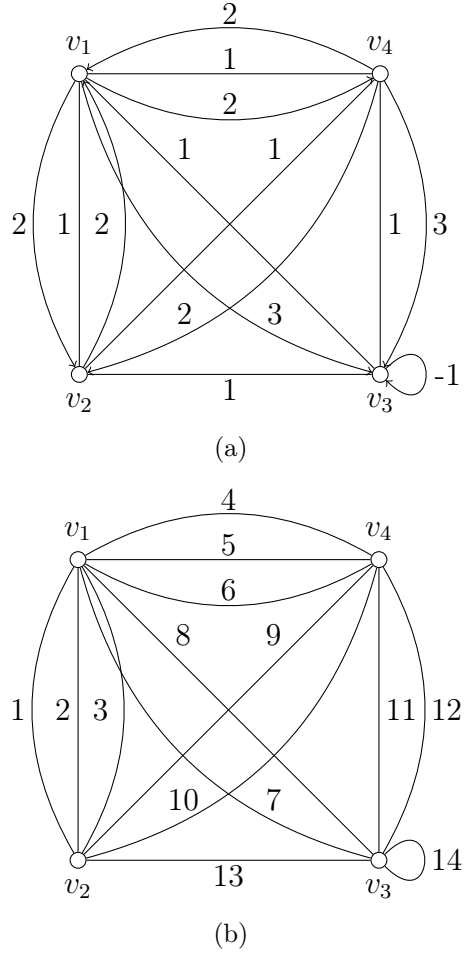
\begin{figure}[htbp]
\centering
\subfigure[]
{
\begin{tikzpicture}[baseline=10pt]
\draw (0,4) node[v, label=above:{$v_1$}](1){};
\draw (0,0) node[v,label=below:{$v_2$}](2){};
\draw (4,0) node[v,label=below:{$v_3$}](3){};
\draw (4,4) node[v,label=above:{$v_4$}](4){};
\draw[] (1)--(2);
\draw[] (1)--(3);
\draw[] (1)--(4);
\draw[] (2)--(3);
\draw[] (2)--(4);
\draw[] (3)--(4);
\draw[bend right, ->>] (1) to (2);
\draw[bend left,<<-] (1) to (2);
\draw[bend right,->>] (1) to (4);
\draw[bend left,<<-] (1) to (4);
\draw[bend right,<-] (2) to (4);
\draw[bend right,->>] (1) to (3);
\draw[bend right,<<-] (3) to (4);
\draw[scale=2,<<-] (3)  to[in=45,out=315,loop] (3);
\draw (-0.8,2) node {2};
\draw (-0.2,2) node {1};
\draw (0.3,2) node {2};
\draw (4.8,2) node {3};
\draw (4.2,2) node {1};
\draw (4.9,0) node {-1};
\draw (2,4.8) node {2};
\draw (2,4.2) node {1};
\draw (2,3.6) node {2};
\draw (2,-0.2) node {1};
\draw (1.4,3) node {1};
\draw (2.6,3) node {1};
\draw (1.4,0.8) node {2};
\draw (2.6,0.8) node {3};
\end{tikzpicture}
}
\hspace{14mm}
\subfigure[]
{
\begin{tikzpicture}[baseline=10pt]
\draw (0,4) node[v, label=above:{$v_1$}](1){};
\draw (0,0) node[v,label=below:{$v_2$}](2){};
\draw (4,0) node[v,label=below:{$v_3$}](3){};
\draw (4,4) node[v,label=above:{$v_4$}](4){};
\draw[] (1)--(2);
\draw[] (1)--(3);
\draw[] (1)--(4);
\draw[] (2)--(3);
\draw[] (2)--(4);
\draw[] (3)--(4);
\draw[bend right] (1) to (2);
\draw[bend left] (1) to (2);
\draw[bend right] (1) to (4);
\draw[bend left] (1) to (4);
\draw[bend right] (2) to (4);
\draw[bend right] (1) to (3);
\draw[bend right] (3) to (4);
\draw[scale=2] (3)  to[in=45,out=315,loop] (3);
\draw (-0.8,2) node {a};
\draw (-0.2,2.05) node {b};
\draw (0.3,2) node {c};
\draw (4.9,2) node {l};
\draw (4.3,2) node {k};
\draw (4.9,0) node {n};
\draw (2,4.8) node {d};
\draw (2,4.2) node {e};
\draw (2,3.6) node {f};
\draw (2,-0.2) node {m};
\draw (1.4,3) node {h};
\draw (2.6,3) node {i};
\draw (1.4,0.8) node {j};
\draw (2.6,0.8) node {g};
\end{tikzpicture}
}
\caption{The gain graph $\G$ and its underlying graph.}
\label{fig:finalexample}
\end{figure}
In order to compute $\phi_3$ with the formula \eqref{eq:ourmainformula}, we need to compute the following:
\begin{itemize}
\item[] $k_3=|\{\{\text{b, e, i}\},\{\text{b, h, m}\},\{\text{e, h, k}\},\{\text{i, k, m}\},\{\text{a, f, i}\},\{\text{a, e, j}\},$ $\{\text{b, d, j}\},$ $\{\text{c, d, i}\},\{\text{e, g, l}\}\}|=9;$
\item[] $k_4=|\{\{\text{b, e, h, i, k, m}\}\}|=1;$
\item[] $k_3^{\pm}=0;$
\item[] $k_2^{(1)}=|\{\{\text{g, h, n}\},\{\text{k, l, m}\}\}|=2;$
\item[] $k_2^\circ=0;$
\item[] $k_3^\circ=0;$
\item[] $g_0=|\{\{\text{e, g, h, k, l, n}\}\}|=1;$
\item[] $g_2=0;$
\item[] $\Theta=|\{\{\text{a, b, c}\},\{\text{d, e, f}\}\}|=2;$
\item[] $k_3^{(1)}=0;$
\item[] $g_1=|\{\{\text{a, b, c, d, e, f, i, j}\}\}|=1.$
\end{itemize} 
From formula \eqref{eq:ourmainformula}, we obtain $$\phi_3=2(9+1+0+2+0+0+1+0+2)+0+1=31.$$ Notice that if we would try to compute the dimension of $F_3$ directly, we would have to write $143$ equations in the $e_{ijk}$.
\end{example}

\section{Matroidal interpretation}

In this section, we will give a matroidal interpretation of our main theorem. In Theorem \ref{theo:ourmain}, the formula \eqref{eq:ourmainformula} of $\phi_3$ is expressed in terms of the numbers of subgraphs of the given gain graph. 
Since some of the subgraphs appearing in the formula \eqref{eq:ourmainformula} describe different realizations of the same matroid, we are able to give a new formula just in terms of the numbers of the submatroids of the frame matroid of the given gain graph. In this way, we obtain a simpler and more compact formula.

In Theorem \ref{theo:ourmain}, we consider the class of arrangements associated to gain graphs $\G$ such that $\langle\G\rangle$ does not have a subgraph isomorphic to $\pm 1K_2^\circ$, it has no loops adjacent to a theta graph with only three edges and it has at most triple parallel edges. This class coincides with the class of arrangements associated to gain graphs such that the underlying matroid has no rank-two flats of size greater than three.

From the list of gain graphs in Section \ref{sect:listgaingraphimportant}, it is immediate to prove the following.
\begin{lemma}\label{lemma:shpeequivmatr} The graphs $1K_3$, $1K_2^{\circ}$, $\pm 1K_2^{(1)}$ and the contrabalanced theta graph with three edges have isomorphic underlying matroids. Similarly, the graphs $1K_4$, $\pm 1K_3$, $1K_3^{\circ}$ and $\pm 1K_3^{(1)}\setminus \mathtt{e}_{23}(-1)$ have isomorphic underlying matroids.
\end{lemma}
\begin{definition} We denote by $M(K_3)$ the frame matroid associated to $1K_3$, and by $M(K_4)$ the one associated to $1K_4$.
\end{definition}

%

Let $\A(\G)$ be an arrangement associated to the gain graph $\G$. Denote by $k_l$ the number of submatroid of $M(\G)$ isomorphic to $M(K_l)$, by $g_1$ the number of submatroid of $M(\G)$ isomorphic to $M(\mathbb{Z}_3K_3\setminus \mathtt{e})$, by $g_2$ the number of submatroid of $M(\G)$ isomorphic to $M(\mathbb{Z}_3K_3)$, and by $k_3^{(1)}$ the number of submatroid of $M(\G)$ isomorphic to $M(\pm 1K_3^{(1)})$.


\begin{remark}\label{rem:d31phi3}
Consider the gain graph $\G=\pm 1K_3^{(1)}$ described in Figure \ref{Fig:disgraphs2}(b). $M(\G)$ is the well-known non-Fano matroid. $\langle\G\rangle$ has two subgraphs isomorphic to $\pm 1K_2^{(1)}$ and four isomorphic to $1K_3$, and hence, $M(\G)$ has six submatroid isomorphic to $M(K_3)$. Similarly, $\langle\G\rangle$ has one subgraph isomorphic to $\pm 1K_3$ and two to $\pm 1K_3^{(1)}\setminus \mathtt{e}_{23}(-1)$, and hence, $M(\G)$ has three submatroid isomorphic to $M(K_4)$.
By Theorem \ref{theo:ourmain}, $\phi_3(\A(\G))=17$, that it coincides with $2(k_3+k_4)-1$.
\end{remark}

We are now able to rewrite formula \eqref{eq:ourmainformula} of Theorem \ref{theo:ourmain} just in terms of submatroid.

\begin{theorem}\label{newtheo:ourmain}
For an arrangement associated to a gain graph $\G$ such that the underlying matroid has no rank-two flats of size greater than three, we have
\begin{equation}\label{eq:ournewmainformula}
\phi_3=2(k_3+k_4+g_2)-k_3^{(1)}+g_1,
\end{equation}
\end{theorem}

\begin{proof}
From the formula \eqref{eq:ourmainformula}, we can get the new formula \eqref{eq:ournewmainformula}, by the use of Lemma~\ref{lemma:shpeequivmatr} and Remark~\ref{rem:d31phi3}.
\end{proof}

In the following example we will use the new formula \eqref{eq:ournewmainformula} to compute $\phi_3$ for the gain graph $\G$ of figure \ref{fig:finalexample} in the example \ref{ex5}.
\begin{example}
There are $13$ submatroids of $M(\G)$ isomorphic to $M(K_3)$ ($9$ submotroids come from the subgraphs isomorphic to $1K_3$ and $2$ from the subgraphs isomorphic to $\pm 1K_2^{(1)}$, and $2$ from contrabalanced theta graph with only three edges). There are also $2$ submatroids of $M(\G)$ isomorphic to $M(K_4)$ (one is isomorphic to $1K_4$ and the other one to $\pm 1K_3^{(1)}\setminus \mathtt{e}_{23}(-1)$). Besides these submatroids, there is $1$ submatroid isomorphic to $M(\mathbb{Z}_3K_3\setminus \mathtt{e})$.

Therefore, we obtain
$$\phi_3=2(13+2+0)-0+1=31.$$
This coincides with the result of the computation in Example \ref{ex5}.

\end{example}

\paragraph{\textbf{Acknowledgements}} The authors would like to thank the referees for their valuable comments which helped to improve the manuscript.


\end{document}